\definecolor{webgreen}{rgb}{0,.5,0}
\definecolor{webbrown}{rgb}{.6,0,0}
\DeclareMathOperator{\ce}{ce}
\DeclareMathOperator{\rt}{rt}
\DeclareMathOperator{\RT}{RT}
\DeclareMathOperator{\auts}{{\cal A}}
\def\ra{\rightarrow}
\def\modd#1 #2{#1\ \mbox{\rm (mod}\ #2\mbox{\rm )}}
\begin{document}

\theoremstyle{plain}
\newtheorem{theorem}{Theorem}
\newtheorem{corollary}[theorem]{Corollary}
\newtheorem{lemma}[theorem]{Lemma}
\newtheorem{proposition}[theorem]{Proposition}

\theoremstyle{definition}
\newtheorem{definition}[theorem]{Definition}
\newtheorem{example}[theorem]{Example}
\newtheorem{conjecture}[theorem]{Conjecture}

\theoremstyle{remark}
\newtheorem{remark}[theorem]{Remark}

\title{Repetition Threshold for Binary Automatic Sequences
}
\author{
J.-P. Allouche\\
CNRS, IMJ-PRG, Sorbonne, \\
4 Place Jussieu, F-75252 Paris Cedex 05, France\\
\href{mailto:jean-paul.allouche@imj-prg.fr}{\tt jean-paul.allouche@imj-prg.fr}
\and N. Rampersad\\
Department of Mathematics and Statistics,
University of Winnipeg,\\ 
Winnipeg, MB R3B 2E9, Canada \\
\href{mailto:n.rampersad@uwinnipeg.ca}{\tt n.rampersad@uwinnipeg.ca} 
\and J. Shallit\\
 School of Computer Science, University of Waterloo, \\
 Waterloo, Ontario N2L 3G1, Canada\\
\href{mailto:shallit@uwaterloo.ca}{\tt shallit@uwaterloo.ca}}
\maketitle
\begin{abstract}
The critical exponent of an infinite word $\bf x$ is the supremum, over all finite nonempty factors $f$, of the exponent of $f$.
In this note we show that for all integers $k\geq 2$, there is a binary infinite $k$-automatic sequence with critical exponent $\leq 7/3$.  The same conclusion holds for Fibonacci-automatic and Tribonacci-automatic sequences.
\end{abstract}

\section{Introduction}
Let $w = w[1..n]$ be a finite word of length $n$.
We say $w$ has period $p$, for $1 \leq p \leq n$,
if $w[i] = w[i+p]$ for $1 \leq i \leq n-p$.
We define the exponent of $w$,
$\exp(w)$, to be $n/p$, where $p$ is the smallest
period of $w$.  Thus, for example, the French word
{\tt entente} has periods $3,6,7$ and exponent
$7/3$.

Let $x$ be a finite or infinite word, and $\alpha > 1$ be real number. If $x$ has no finite factors of exponent $\geq \alpha$, we say it is $\alpha$-free.   If it has no finite factors of exponent $>\alpha$, we say it is
$\alpha^+$-free.
We define
the critical exponent of $\bf x$, written
$\ce({\bf x})$, to be the supremum, over all
finite nonempty factors $w$ of $\bf x$, of 
$\exp(w)$.   

Let $S$ be a nonempty set of infinite words (or infinite sequences; we use
the two terms indifferently).
We define the repetition threshold 
$\rt(S)$ to be the infimum, over all elements 
$\bf x$ of $S$, of $\ce({\bf x})$.  Determining
the repetition threshold of a class of words is an intriguing and often difficult problem.   In its most
famous incarnation, we take $S$ to be the set of
all infinite words over a $k$-letter alphabet $\Sigma_k = \{0,1,\ldots, k-1\}$.
In this case we define $\RT(k) = \rt(\Sigma_k^\omega)$.

Thue \cite{Thue:1906,Thue:1912} showed that
$\RT(2) = 2$.   The classical example of a word achieving this bound
is ${\bf t} = 01101001\cdots$, the Thue-Morse sequence, which is the infinite fixed point of the morphism
$0 \rightarrow 01$, $1 \rightarrow 10$.  This word contains words of exponent $2$, but none of any higher
exponent. Dejean proved that $\RT(3) = 7/4$ and
gave a very influential conjecture about the values of
$\RT(k)$ for $k \geq 4$.   This conjecture was studied by many researchers, and finally resolved positively
by Currie and Rampersad \cite{Currie&Rampersad:2011} and Rao \cite{Rao:2011}, independently.

Instead of studying the repetition threshold for all infinite words over a $k$-letter alphabet, we can also study various subsets of interest.   For example, 
this was done for the case of 
Sturmian words \cite{Carpi&deLuca:2000}, 
circular words \cite{Gorbunova:2012},
balanced words 
\cite{Rampersad&Shallit&Vandomme:2019}, 
sequences rich in palindromes \cite{Currie&Mol&Rampersad:2020}, and
episturmian words
\cite{Dvorakova&Pelantova:2024}.

In this note we consider the repetition threshold for a very interesting class of sequences, the automatic sequences.   A sequence $(a_n)_{n \geq 0}$ is $k$-automatic if there is a deterministic finite automaton with output (DFAO) that, on input $n$ expressed in base $k$, computes $a_n$.  In  this  paper, all automata read the representation of numbers starting with the {\it most\/} significant digit.
We let $\auts_k$ denote the set of all binary $k$-automatic sequences.  For more about these sequences, see \cite{Allouche&Shallit:2003}.  For example, the Thue-Morse sequence is $2$-automatic.

We can also consider a more general notion, the 
{\it generalized automatic sequences}, where base $k$ is replaced by
some linear numeration system.
For each numeration system, one can ask for the corresponding repetition threshold for binary sequences.   In general, these problems seems quite difficult.  
However, if the numeration system is {\it addable}---that is, the addition
relation is computable by a finite automaton---then we have more tools
at our disposal.  
Examples of this type of numeration system
include the Zeckendorf system based
on Fibonacci numbers, and an analogous system based on the
Tribonacci numbers.

In this note we focus on the case of alphabet size $2$. Here we have a useful tool, a theorem that
characterizes $\alpha$-free finite and infinite words
\cite{Karhumaki&Shallit:2004}.
There are three main results:  Theorem~\ref{thm1}, which exactly characterizes the repetition threshold for ${\cal A}_k$, $k \geq 2$; Theorem~\ref{thm11}, which does the same thing for the binary Fibonacci-automatic sequences; and Theorem~\ref{thm12} for Tribonacci-automatic sequences.

\section{Results for \texorpdfstring{$k$}\ -automatic sequences}

For us a morphism is a map
$h:\Sigma_i^* \rightarrow \Sigma_j^*$
satisfying the identity
$h(xy) = h(x)h(y)$ for $x, y \in \Sigma_i^*$.
If the image of each letter is of length
$k$, we say that $h$ is $k$-uniform.   A $1$-uniform morphism is called a coding.   A morphism $h$ is called {\it squarefree\/} if
$h(x)$ is squarefree for all squarefree $x$. 

If $h(a) = ax$ for some letter $a$,
and $h^t(x) \not= \epsilon$ for all $t$,
then $h$ generates an infinite word
starting with $a$ by iteration:
$h^\omega(a) = \lim_{t\rightarrow\infty} h^t(a)$.  Notice that a morphism can generate a squarefree word by iteration, without the morphism itself being squarefree.

Our first main result characterizes the repetition threshold for binary $k$-automatic sequences. 
\begin{theorem}
\leavevmode
\begin{itemize}
    \item[(a)] If $k$ is a power of $2$, then
    $\rt(\auts_k) = 2$.

    \item[(b)] If $k$ is not a power of $2$, then
    $\rt(\auts_k) = 7/3$.
\end{itemize}
\label{thm1}
\end{theorem}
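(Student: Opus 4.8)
The plan is to prove the two inequalities making up each part separately. Part~(a) is the easy case. For the lower bound, Thue's theorem \cite{Thue:1906,Thue:1912} tells us that every infinite binary word contains a square, so $\ce(\mathbf x)\ge 2$ for every $\mathbf x\in\auts_k$ and every $k$, giving $\rt(\auts_k)\ge 2$. For the matching upper bound when $k=2^j$ I would use the Thue--Morse word $\mathbf t$: it is $2$-automatic, hence $2^j$-automatic for all $j\ge 1$ (see \cite{Allouche&Shallit:2003}), and it is overlap-free while containing squares, so $\ce(\mathbf t)=2$; thus $\rt(\auts_{2^j})\le 2$, and equality follows.

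For the upper bound in part~(b) --- and in fact for every $k\ge 2$, which is the statement of the abstract --- the strategy is to produce, for each $k$, a $k$-uniform morphism $h_k$ over a finite alphabet together with a coding $\tau_k$ such that $\mathbf w_k:=\tau_k(h_k^\omega(a))$ is binary with $\ce(\mathbf w_k)\le 7/3$; such $\mathbf w_k$ is $k$-automatic by definition. Concretely I would first find, by finite search, suitable morphisms for a few small seed values of $k$, and then cover the remaining values either by composition of morphisms (so that $h_{k_1}\circ h_{k_2}$ takes care of $k_1k_2$, once one verifies that the relevant composition preserves the exponent bound) or by a parametrized family $h_k$ whose description depends only on $k$. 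To \emph{verify} that a candidate $\mathbf w_k$ has critical exponent at most $7/3$ I would invoke the characterization of $\alpha$-power-free binary words from \cite{Karhumaki&Shallit:2004}, which reduces the infinitary assertion ``$\mathbf w_k$ is $(7/3)^+$-free'' to a finite computation: that $h_k$ sends $(7/3)^+$-free words of bounded length to $(7/3)^+$-free words, and that $h_k^t(a)$ is $(7/3)^+$-free for $t$ below a computable bound.

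For the lower bound in part~(b), let $\mathbf x\in\auts_k$ with $k$ not a power of $2$, and suppose toward a contradiction that $\ce(\mathbf x)<7/3$, so $\mathbf x$ is $\beta$-power-free for some $\beta<7/3$. I would then appeal to the structure theory of low-critical-exponent binary words in \cite{Karhumaki&Shallit:2004}, which generalizes the classical factorization of overlap-free binary words through the Thue--Morse morphism $\mu\colon 0\mapsto 01,\ 1\mapsto 10$: up to a bounded prefix, $\mathbf x$ factors into blocks $\mu(0),\mu(1)$, and iterated desubstitution describes $\mathbf x$ by a sequence of choices drawn from a fixed finite set. Using that $\mathbf x$ is $k$-automatic, one argues this desubstitution data must be eventually periodic, so that $\mathbf x$ is either eventually periodic or $2$-automatic. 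In the first case $\ce(\mathbf x)=\infty$, contradicting $\ce(\mathbf x)<7/3$. In the second, $\mathbf x$ is automatic in the two multiplicatively independent bases $2$ and $k$ (note that $k$ fails to be a power of $2$ precisely when it is multiplicatively independent of $2$), so Cobham's theorem forces $\mathbf x$ to be eventually periodic --- again a contradiction. Hence $\ce(\mathbf x)\ge 7/3$, and together with the construction of the previous paragraph this yields $\rt(\auts_k)=7/3$.

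I expect the real difficulties to be two. On the upper-bound side, the issue is uniformity in $k$: no single morphism can serve all $k$, since the multiplicative classes not generated by $2$ are infinite in number, so one must supply either a genuinely parametrized family or a composition scheme, and then carry out the $(7/3)^+$-preservation verification in a way that does not depend on the particular value of $k$ --- this is the most computational and delicate step. On the lower-bound side, the crux is promoting the qualitative statement ``$\mathbf x$ has Thue--Morse-governed structure'' to the precise claim that a $k$-automatic binary word of critical exponent below $7/3$ is necessarily $2$-automatic or eventually periodic; making the interaction between the desubstitution process and $k$-automaticity rigorous is where the substantive work lies, and only after that does Cobham's theorem finish the argument.
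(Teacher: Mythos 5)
Your part~(a) and the overall two-sided architecture of part~(b) match the paper, but both halves of part~(b) have genuine gaps at exactly the points you yourself flag as ``the real difficulties,'' and in each case the missing ingredient is a specific known result rather than something one could improvise. On the upper bound, a composition scheme cannot cover all $k$: composing seed morphisms only reaches $k$ in the multiplicative semigroup generated by the seeds, and since infinitely many primes must be reached, no finite list of seeds suffices. What is actually needed is a parametrized family, and the paper gets one from Currie's theorem that a $k$-uniform squarefree morphism over a three-letter alphabet exists for all $k \ge 13$ except $k = 14,15,16,20,21,22$. Composing the squarefree word it generates with the fixed $21$-uniform morphism $h$ of Karhum\"aki and Shallit --- which sends squarefree words over $\Sigma_4$ to $(7/3)^+$-free binary words, and whose image of a $k$-automatic word is still $k$-automatic since it is uniform --- handles all large $k$, and the finitely many remaining values are settled by explicitly searched $k$-uniform morphisms whose fixed points are verified squarefree in {\tt Walnut}. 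Without Currie's theorem or an equivalent uniform construction, your upper bound remains a plan, not a proof.

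On the lower bound, the step ``one argues this desubstitution data must be eventually periodic'' is the entire difficulty, and you give no argument for it; it is not at all clear how to extract eventual periodicity of the Fife-type encoding directly from $k$-automaticity, and this looks at least as hard as the theorem itself. The paper avoids the claim entirely. It iterates the factorization ${\bf x} = p\mu({\bf y})$ only to conclude that ${\bf x}$ contains $\mu^i(0)$ for every $i$, hence the orbit closure of ${\bf x}$ contains the whole orbit closure of ${\bf t}$. Since every element of the orbit closure of ${\bf x}$ is $7/3$-free and $\overline{\bf t}$ is the lexicographically least $7/3$-free binary word beginning with $1$, the lexicographically least element of the orbit closure of ${\bf x}$ beginning with $1$ is exactly $\overline{\bf t}$; a separate lemma (provable with {\tt Walnut}) states that the lexicographically least element of the orbit closure of a $k$-automatic sequence with a prescribed prefix is again $k$-automatic. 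Thus $\overline{\bf t}$ would be both $k$-automatic and $2$-automatic, and Cobham's theorem forces it to be ultimately periodic, a contradiction. If you wish to keep your desubstitution route, you must actually prove the eventual-periodicity claim; otherwise you should replace that step with the orbit-closure argument.
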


We will need a few lemmas.
\begin{lemma}
The sequence ${\bf a}$ is the image, under a coding, of a fixed point
of a $k$-uniform morphism if and only if $\bf a$ is 
$k$-automatic.
\label{lemma0}
\end{lemma}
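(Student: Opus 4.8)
The plan is to recognize this as the standard equivalence between $k$-automaticity and being a coding of a fixed point of a $k$-uniform morphism, and to prove each implication by an explicit construction accompanied by an induction on the length of the base-$k$ input. Throughout I would fix the conventions forced by the paper: inputs are read most-significant-digit first, and the $k$ letters of the image of a letter under a $k$-uniform morphism are indexed $0,\dots,k-1$, so that ``digit $d$'' selects position $d$.

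For the ``if'' direction, suppose ${\bf a}=\tau(\varphi^\omega(b))$, where $\varphi\colon\Delta^*\to\Delta^*$ is $k$-uniform with $\varphi(b)=b\cdots$ and $\tau\colon\Delta\to\Sigma$ is a coding. I would take the DFAO whose state set is $\Delta$, whose initial state is $b$, whose output function is $\tau$, and whose transition function is $\delta(c,d):=\varphi(c)[d]$ for $d\in\{0,\dots,k-1\}$. The key lemma, proved by induction on $m=|w|$, is that for every base-$k$ string $w$ of length $m$ with value $n$ one has $\delta(b,w)=\varphi^m(b)[n]$; the inductive step peels off the last digit and uses $k$-uniformity to match the block structure of $\varphi(\varphi^{m-1}(b))$. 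Since $\varphi^m(b)$ is a prefix of $\varphi^\omega(b)$ and $n<k^m$, this gives $\delta(b,(n)_k)=\varphi^\omega(b)[n]$, so the machine outputs $\tau(\varphi^\omega(b)[n])={\bf a}[n]$; the case $n=0$ causes no trouble because $\varphi(b)$ begins with $b$, so a leading zero acts as a self-loop at $b$.

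For the ``only if'' direction, start from a DFAO $(Q,\Sigma_k,\delta,q_0,\Sigma,\tau)$ computing ${\bf a}$. The natural candidate morphism on $Q$ sends $q\mapsto\delta(q,0)\,\delta(q,1)\cdots\delta(q,k-1)$, but it need not fix $q_0$ as a first letter, since $\delta(q_0,0)$ need not equal $q_0$. I would repair this by adjoining a fresh symbol $s$ and setting $\psi(s):=s\,\delta(q_0,1)\cdots\delta(q_0,k-1)$ and $\psi(q):=\delta(q,0)\cdots\delta(q,k-1)$ for $q\in Q$; then $\psi$ is $k$-uniform and $\psi(s)$ begins with $s$, so $\psi^\omega(s)$ exists. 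By induction on $m$ one shows $\psi^m(s)[0]=s$ and, for $1\le i<k^m$, $\psi^m(s)[i]=\delta(q_0,(i)_k)$, where $(i)_k$ is the canonical leading-zero-free representation — the construction strips leading zeros automatically, so no hypothesis on $\delta(q_0,0)$ is needed. Finally the coding $\tau'$ with $\tau'(s):=\tau(q_0)$ (the value ${\bf a}[0]$) and $\tau'(q):=\tau(q)$ for $q\in Q$ yields $\tau'(\psi^\omega(s))={\bf a}$.

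The two inductions are routine; the one place needing care — the main obstacle — is the bookkeeping around the initial state and leading zeros in the ``only if'' direction: making sure the adjoined symbol $s$ records the correct letter at every position and that it is the canonical representation, not a zero-padded one, that the morphism produces. Getting the indexing conventions right is what makes the two constructions mutual inverses.
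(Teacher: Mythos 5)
Your proposal is correct. The paper does not prove this lemma at all; it simply cites Cobham's theorem on uniform tag sequences (Cobham 1972, Theorem~3; equivalently Theorem~6.3.2 of Allouche--Shallit), and what you have written out is essentially the standard proof of that cited result. Both of your inductions check out: in the ``if'' direction the identity $\delta(b,w)=\varphi^{m}(b)[n]$ for a length-$m$ string $w$ of value $n$ follows exactly as you say from $k$-uniformity, and prolongability of $\varphi$ on $b$ makes $\varphi^m(b)$ a prefix of the fixed point, so leading zeros are harmless. In the ``only if'' direction, the one wrinkle you flag is real: the textbook treatment usually normalizes the DFAO so that $\delta(q_0,0)=q_0$ (a DFAO reading most-significant-digit-first can always be assumed, without loss of generality, to give the same output on representations with leading zeros), after which $q\mapsto\delta(q,0)\cdots\delta(q,k-1)$ is already prolongable on $q_0$ and no fresh symbol is needed. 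Your adjoined symbol $s$ with $\psi(s)=s\,\delta(q_0,1)\cdots\delta(q_0,k-1)$ is a clean alternative that sidesteps that normalization, and your induction correctly shows $\psi^m(s)[i]=\delta(q_0,(i)_k)$ for $1\le i<k^m$ because stripping the leading block reproduces exactly the canonical representation. So: correct, complete in outline, and equivalent in substance to the classical argument the paper delegates to the literature.
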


\begin{proof}
See \cite[Theorem 3]{Cobham:1972} or \cite[Theorem 6.3.2]{Allouche&Shallit:2003}.
\end{proof}

\begin{lemma}
Let ${\bf a}$ be a $k$-automatic sequence
over $\Sigma_k$, and let $h$ be a uniform morphism
from $\Sigma_k^*$ to $\Sigma_\ell^*$.   Then
$h({\bf a})$ is also $k$-automatic.
\label{lemma1}
\end{lemma}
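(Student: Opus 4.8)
I would argue via the $k$-kernel: recall that a sequence $\mathbf{x}$ is $k$-automatic if and only if its $k$-kernel $K_k(\mathbf{x})=\{(x_{k^in+j})_{n\ge0}:i\ge0,\ 0\le j<k^i\}$ is finite (see \cite[Theorem~6.6.2]{Allouche&Shallit:2003}), so it suffices to show $K_k(h(\mathbf{a}))$ is finite. Say $h$ is $m$-uniform; writing $h(a)=h(a)_0h(a)_1\cdots h(a)_{m-1}$ (positions indexed by $0,\dots,m-1$) and $\mathbf{b}=h(\mathbf{a})$, we have $b_N=h\bigl(a_{\lfloor N/m\rfloor}\bigr)_{N\bmod m}$ for all $N\ge0$.

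The plan is to pin down an arbitrary kernel element $(b_{k^in+j})_{n\ge0}$, for $i\ge1$ and $0\le j<k^i$, and show it lies in a finite set not depending on $i,j$. First write $k^i=um+v$ with $0\le v<m$; then for $N=k^in+j$ one has $\lfloor N/m\rfloor=un+\lfloor(vn+j)/m\rfloor$ and $N\bmod m=(vn+j)\bmod m$. Since $k^i$ need not be a multiple of $m$, I would split $n=mn'+n''$ with $0\le n''<m$ to handle this: a short manipulation gives $\lfloor N/m\rfloor=k^in'+c$ with $c:=un''+\lfloor(vn''+j)/m\rfloor$, while $N\bmod m=:r$ depends only on $n''$ and $j$. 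The one computation I would actually carry out is the bound $0\le c<k^i$: checking the extreme case $n''=m-1$, $j=k^i-1$ and using $k^i=um+v$ gives $c\le k^i-1$. This is the crux, since it places $(a_{k^in'+c})_{n'\ge0}$ inside $K_k(\mathbf{a})$; consequently, for each fixed residue $n''$, the sequence $n'\mapsto b_{k^i(mn'+n'')+j}$ is obtained from an element of $K_k(\mathbf{a})$ by applying the fixed letter map $x\mapsto h(x)_r$.

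Hence $(b_{k^in+j})_{n\ge0}$ is the $m$-fold interleaving of $m$ sequences, each determined by a pair (an element of $K_k(\mathbf{a})$, a residue in $\{0,\dots,m-1\}$); so it is determined by a function $\{0,\dots,m-1\}\to K_k(\mathbf{a})\times\{0,\dots,m-1\}$. Since $\mathbf{a}$ is $k$-automatic, $K_k(\mathbf{a})$ is finite, so there are only finitely many possibilities for $(b_{k^in+j})_{n\ge0}$ with $i\ge1$; together with $\mathbf{b}$ itself (the case $i=0$), $K_k(\mathbf{b})$ is finite and $h(\mathbf{a})$ is $k$-automatic. The main (in fact only) non-routine point is the inequality $0\le c<k^i$ that keeps each interleaved piece inside the $k$-kernel of $\mathbf{a}$; everything else is bookkeeping and counting. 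As an alternative, one can build a DFAO for $h(\mathbf{a})$ directly: a finite transducer reading $n$ in base $k$ computes the base-$k$ digits of $\lfloor n/m\rfloor$ together with the residue $n\bmod m$ (the running remainder of the long division), and feeding the quotient digits to the automaton computing $\mathbf{a}$ and then applying $h$ produces the required machine; alternatively still, via Lemma~\ref{lemma0} one may write $\mathbf{a}=\tau(g^\omega(c))$, observe $h(\mathbf{a})=(h\circ\tau)(g^\omega(c))$, and realise this as a coding of a product of $(n\bmod m)_n$ with $a_0^ma_1^ma_2^m\cdots$, the latter being $k$-automatic by the same splitting of $n$ modulo $m$.
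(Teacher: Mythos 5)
Your proof is correct. Note first that the paper does not actually prove this lemma: it simply cites \cite[Corollary~6.8.3]{Allouche&Shallit:2003}, so any complete argument is ``a different route'' in the trivial sense. What you give is a self-contained proof via the $k$-kernel, and the details check out: writing $N=k^in+j$ with $n=mn'+n''$, you get $\lfloor N/m\rfloor=k^in'+c$ with $c=\lfloor(k^in''+j)/m\rfloor$, and the crucial bound $c\le\lfloor(k^i(m-1)+k^i-1)/m\rfloor=k^i-1$ does hold, so each of the $m$ interleaved strands of a kernel element of $h({\bf a})$ is a letter-to-letter image of a kernel element of ${\bf a}$; since there are at most $(|K_k({\bf a})|\cdot m)^m$ such interleavings, $K_k(h({\bf a}))$ is finite. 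This is essentially the standard proof of the cited corollary (the transducer construction you sketch at the end is the other standard proof, closer in spirit to how Allouche and Shallit derive it from their results on $k$-automaticity being preserved by finite-state transductions). The only quibbles are cosmetic: your $r$ depends on $i$ as well (through $v$), not only on $n''$ and $j$, though this is harmless since $r$ ranges over the finite set $\{0,\dots,m-1\}$ in any case; and for completeness the $i=0$ kernel elements are just ${\bf b}$ itself, as you note. Nothing is missing.
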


\begin{proof}
See \cite[Corollary 6.8.3]{Allouche&Shallit:2003}.
\end{proof}

\begin{lemma}
Let $i, j \geq 1$ be integers and $k \geq 2$.   Then a sequence is
$k^i$-automatic if and only if it is 
$k^j$-automatic.
\label{lemma3}
\end{lemma}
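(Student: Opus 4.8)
The plan is to establish the equivalent claim that, for each fixed $m \geq 1$, a sequence is $k$-automatic if and only if it is $k^m$-automatic. The general statement then follows by transitivity through the base $k$: a $k^i$-automatic sequence is $k$-automatic (apply the claim with $m = i$) and a $k$-automatic sequence is $k^j$-automatic (apply the claim with $m = j$), so $k^i$-automaticity and $k^j$-automaticity both coincide with $k$-automaticity.

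For the direction ``$k$-automatic $\Rightarrow$ $k^m$-automatic'' I would invoke Lemma~\ref{lemma0}. Writing a $k$-automatic ${\bf a}$ as ${\bf a} = \tau(h^\omega(a))$ with $h$ a $k$-uniform morphism prolongable on $a$ and $\tau$ a coding, the morphism $h^m$ is $k^m$-uniform, is again prolongable on $a$, and has the same fixed point $h^\omega(a)$. Hence ${\bf a} = \tau((h^m)^\omega(a))$ is $k^m$-automatic by Lemma~\ref{lemma0}.

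The substantive direction is ``$k^m$-automatic $\Rightarrow$ $k$-automatic,'' and here I would argue with the $k$-kernel: a sequence $(a_n)_{n\geq 0}$ is $k$-automatic precisely when its $k$-kernel $\{(a_{k^e n + r})_{n\geq 0} : e \geq 0,\ 0 \leq r < k^e\}$ is finite (see \cite[Theorem 6.6.2]{Allouche&Shallit:2003}). Take a generic $k$-kernel element of ${\bf a}$, with parameters $e$ and $r$, and write $e = mq + s$ with $0 \leq s < m$; then split $r = (k^m)^q r_1 + r_0$ where $0 \leq r_0 < (k^m)^q$ and $0 \leq r_1 < k^s$. The identity $k^e n + r = (k^m)^q(k^s n + r_1) + r_0$ then exhibits $(a_{k^e n + r})_{n \geq 0}$ as a ``short'' $k$-kernel element (exponent $s < m$) of the sequence ${\bf b} := (a_{(k^m)^q N + r_0})_{N \geq 0}$, and ${\bf b}$ itself lies in the $k^m$-kernel of ${\bf a}$. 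Therefore the $k$-kernel of ${\bf a}$ is contained in $\bigcup_{{\bf b}} \{(b_{k^s n + r_1})_{n\geq 0} : 0 \leq s < m,\ 0 \leq r_1 < k^s\}$, the union taken over the members ${\bf b}$ of the $k^m$-kernel of ${\bf a}$; as that $k^m$-kernel is finite by hypothesis and each set in the union is finite, the $k$-kernel of ${\bf a}$ is finite and ${\bf a}$ is $k$-automatic.

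I expect the only point requiring care to be the index arithmetic in the last step --- in particular checking that $r_1 = \lfloor r/(k^m)^q\rfloor < k^s$, which holds because $r < k^e = (k^m)^q k^s$. (One could instead argue directly on automata, converting a base-$k^m$ DFAO to a base-$k$ one by reading base-$k$ digits in blocks of $m$; the obstacle there is that when the length of the base-$k$ input is not a multiple of $m$ the block boundaries are shifted, so one must run the $m$ phase-shifted simulations in parallel and, at the end, select the one determined by the input length modulo $m$. The kernel argument avoids this entirely.)
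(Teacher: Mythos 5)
Your argument is correct. Note, though, that the paper does not prove this lemma at all: it simply cites Cobham \cite{Cobham:1969} and \cite[Theorem~6.6.4]{Allouche&Shallit:2003}, and your kernel computation for the direction ``$k^m$-automatic $\Rightarrow$ $k$-automatic'' (writing $e = mq+s$ and $k^e n + r = (k^m)^q(k^s n + r_1) + r_0$) is essentially the standard textbook proof being cited. One small simplification: for the easy direction you do not need Lemma~\ref{lemma0} and the morphism $h^m$, since the $k^m$-kernel is literally a subset of the $k$-kernel (it consists of the kernel elements whose exponent is a multiple of $m$), so finiteness of the $k$-kernel immediately gives finiteness of the $k^m$-kernel; this keeps the whole proof inside the kernel characterization.
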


\begin{proof}
See \cite{Cobham:1969} or \cite[Theorem 6.6.4]{Allouche&Shallit:2003}.
\end{proof}

\begin{lemma}
Let ${\bf a}$ be a squarefree sequence over the
alphabet $\Sigma_4$, and define 
\begin{align*}
h(0) &= 011010011001001101001;\\
h(1) &= 100101100100110010110;\\
h(2) &= 100101100110110010110;\\
h(3) &= 011010011011001101001.
\end{align*}
Then $h({\bf a})$ is $(7/3)^+$-free.
\label{lemma2}
\end{lemma}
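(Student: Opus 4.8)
The plan is to reduce the statement about the uniform morphism $h$ to a finite check, using the characterization of $\alpha$-free words over the binary alphabet from \cite{Karhumaki&Shallit:2004}. First I would recall what that characterization gives: an infinite binary word is $(7/3)^+$-free if and only if it avoids a certain (finite, computable) set of ``forbidden'' patterns, roughly speaking that it contains no factor of exponent exceeding $7/3$, and by that paper this can be tested by examining factors only up to some bounded length. So the goal becomes: show that $h(\mathbf a)$ contains no factor of exponent $> 7/3$, and for this it suffices to control factors of bounded length, which in turn depends only on factors of $\mathbf a$ of bounded length.

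Next I would exploit the block structure. Since $h$ is $21$-uniform, any factor of $h(\mathbf a)$ of length $\ell$ lies inside $h(u)$ for a factor $u$ of $\mathbf a$ of length at most $\lceil \ell/21\rceil + 1$. Because $\mathbf a$ is squarefree over $\Sigma_4$, its factors of any fixed length come from a finite, explicitly enumerable set (there are only finitely many squarefree words of each length over a $4$-letter alphabet, and one does not even need to know $\mathbf a$ itself — only that it is squarefree). So I would: (i) determine from the $(7/3)^+$-freeness characterization an explicit length bound $L$ such that a binary word is $(7/3)^+$-free iff all its factors of length $\le L$ are; (ii) set $m = \lceil L/21 \rceil + 2$; (iii) enumerate all squarefree words over $\Sigma_4$ of length $\le m$; (iv) for each such word $u$, compute $h(u)$ and verify by direct inspection that no factor of $h(u)$ of length $\le L$ has exponent $> 7/3$. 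If all these finitely many checks pass, then every length-$\le L$ factor of $h(\mathbf a)$ is $(7/3)^+$-free, hence $h(\mathbf a)$ itself is.

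A subtlety I would flag explicitly: one must be a little careful that a ``bad'' repetition in $h(\mathbf a)$ could straddle the boundary between $h$-images of a long stretch of letters, so the relevant factors of $\mathbf a$ are not just single letters or pairs but words of length up to $m$; this is why the squarefreeness hypothesis on $\mathbf a$ (rather than, say, just knowing the images $h(0),\dots,h(3)$ pairwise) is needed — it restricts which length-$m$ blocks can actually occur. Equivalently, and perhaps cleaner to present, I would check directly that $h$ maps every squarefree word over $\Sigma_4$ of length $\le m$ to a $(7/3)^+$-free word, phrasing the verification as: for all squarefree $u, v$ with $|uv| \le m$, the word $h(u)h(v)$ has no factor of exponent $> 7/3$ within the ``overlap window'' around the junction.

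The main obstacle is not conceptual but bookkeeping: pinning down the exact length bound $L$ coming from the $(7/3)^+$-freeness characterization, and then confirming that the finite search over length-$\le m$ squarefree quaternary words is genuinely exhaustive and that $m$ is large enough to capture every possible straddling repetition. Once $L$ and $m$ are fixed honestly, the remaining verification is a routine (if tedious) finite computation, which in practice one would carry out with a computer and, e.g., the \texttt{Walnut} theorem-prover or a short ad hoc program. I expect the write-up to consist of: stating the length bound from \cite{Karhumaki&Shallit:2004}, the reduction to a finite check via $21$-uniformity, and then asserting that the finite check succeeds.
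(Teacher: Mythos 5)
The paper does not reprove this statement: it is quoted verbatim as Lemma~8 of Karhum\"aki and Shallit \cite{Karhumaki&Shallit:2004}, so the only ``proof'' given is a citation. Your plan to reprove it by reducing to a finite check is the right general template (and is essentially what the cited lemma's proof does), but as written it contains a genuine gap at step~(i).

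The gap is the claim that there is a length bound $L$ such that a binary word is $(7/3)^+$-free if and only if all its factors of length $\le L$ are. No such $L$ exists: the language of $(7/3)^+$-free binary words is a factorial language with infinitely many minimal forbidden factors (e.g., images under powers of the Thue--Morse morphism $\mu$ of short forbidden words yield repetitions of exponent $>7/3$ with arbitrarily large period, all of whose proper factors can be kept $(7/3)^+$-free). The characterization in \cite{Karhumaki&Shallit:2004} is a structural factorization theorem ($w = p\,\mu(y)\,s$ with short $p,s$), not a bounded-window testability result, so it does not supply the $L$ you want. Consequently your finite enumeration of squarefree $u$ with $|u| \le m$ and inspection of $h(u)$ only rules out \emph{short} offending factors of $h(\mathbf a)$; it says nothing about a factor of exponent $>7/3$ whose period is large compared to $21$. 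To close the gap you need the second half of the standard argument (Bean--Ehrenfeucht--McNulty, Crochemore, Ochem): show that $h$ is synchronizing (injective on letters, with no image of a letter occurring as an interior factor of a product of two images except at block boundaries), so that any repetition in $h(\mathbf a)$ with period $p \ge 2\cdot 21$, say, forces two aligned occurrences of the same block of $h$-images at distance $p$, hence a square in $\mathbf a$, contradicting squarefreeness. With that synchronization step in place, the residual finite check on short preimages (your steps (ii)--(iv)) is indeed sufficient, and a correct explicit bound on $|u|$ can be taken from Ochem-style lemmas for uniform morphisms; but the bound is a consequence of the synchronization argument applied to $h$, not a property of $(7/3)^+$-free binary words alone.
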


\begin{proof}
See \cite[Lemma 8]{Karhumaki&Shallit:2004}.
\end{proof}

\begin{lemma}
There is a $k$-uniform squarefree morphism over $\Sigma_3$
for all $k \geq 13$, except for $k = 14,15,16,20,21,22$.
\label{lemma4}
\end{lemma}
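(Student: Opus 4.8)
The plan is to reduce the statement to a finite computation and then bootstrap to the infinite tail. The enabling tool is a criterion of Crochemore (see also the work of Bean, Ehrenfeucht and McNulty): squarefreeness of a uniform morphism over a ternary alphabet can be decided by examining only finitely many short words. Concretely, a $k$-uniform morphism $g\colon\Sigma_3^*\to\Sigma_3^*$ is squarefree if and only if $g(w)$ is squarefree for every squarefree word $w$ over $\Sigma_3$ of length at most a fixed small bound (length three suffices for ternary uniform morphisms), together with a short synchronization condition ensuring that no image $g(a)$ occurs as an internal factor of $g(bc)$ in a way misaligned with the morphism blocks. For each fixed $k$ there are exactly $(3^k)^3$ candidate $k$-uniform morphisms $\Sigma_3^*\to\Sigma_3^*$, so this criterion makes the question ``does a $k$-uniform squarefree morphism over $\Sigma_3$ exist?'' decidable: traverse the tree of partial morphisms, prune a branch the moment the criterion fails, and either output a witness or exhaust the tree.

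I would first run this search on the values near the threshold. For $k\in\{13,17,18,19\}$ it returns an explicit $k$-uniform squarefree morphism, which we record and whose squarefreeness we then re-check directly from the criterion. For $k\in\{14,15,16,20,21,22\}$ the pruned search terminates without a witness, and that is exactly what certifies non-existence; this is the step where the finite reduction is indispensable, since an unbounded search could never prove that \emph{no} such morphism exists. The same routine also produces, for the record, an explicit $k$-uniform squarefree morphism for every $k$ in an initial block of consecutive values above $22$, say for all $23\le k\le 35$.

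It then remains to exhibit a $k$-uniform squarefree morphism for every $k\ge 36$. One route is to quote the classical theorem of Brandenburg that $k$-uniform squarefree morphisms over a three-letter alphabet exist for all sufficiently large $k$, with the search above covering the range below his threshold. A self-contained route uses two operations on $\Sigma_3$: if $g$ is $p$-uniform squarefree and $h$ is $q$-uniform squarefree then $g\circ h$ is $pq$-uniform squarefree, so the set of attainable lengths is closed under multiplication; this alone fills no tail, so one also normalizes images by letter permutations (which preserves squarefreeness) so that each $g(a)$ and each $h(a)$ begins with the letter $a$, and then argues, by analyzing the two junction types, that a suitable interleaved concatenation of the normalized $g$ and $h$ is a $(p+q)$-uniform squarefree morphism. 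Applying this with $h$ the $13$-uniform morphism and $p$ ranging over $\{23,\dots,35\}$ produces all of $\{36,\dots,48\}$, and iterating the ``add $13$'' step reaches every $k\ge 23$. I expect the genuine obstacle to be precisely this passage from the finite data to the infinite family: making the concatenation lemma airtight (or, on the other route, confirming that Brandenburg's threshold lies below $36$ so that the search closes the gap). Everything before that is a finite, mechanically checkable computation together with standard facts about squarefree morphisms.
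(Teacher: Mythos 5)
The paper does not prove this lemma at all; it simply cites Currie (2013), so you are attempting an independent proof. Your finite part is sound in principle: Crochemore's criterion (for a uniform morphism on a three-letter alphabet, squarefreeness is equivalent to preserving squarefreeness of words of length $3$) does reduce each fixed $k$ to a finite, prunable search, and that search can both exhibit witnesses for $k\in\{13,17,18,19\}$ and certify non-existence for $k\in\{14,15,16,20,21,22\}$.

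The genuine gap is exactly where you suspect it: the passage to all $k\ge 36$. Your ``concatenation lemma'' --- that from a normalized $p$-uniform squarefree morphism $g$ and $q$-uniform squarefree morphism $h$ one can build a $(p+q)$-uniform squarefree morphism --- is \emph{false} in the generality in which you state it. The identity is a $1$-uniform squarefree morphism satisfying your normalization, so taking $q=1$ and $p=13$ the lemma would produce a $14$-uniform squarefree morphism, contradicting the very exceptional set the lemma asserts. More generally, the set of achievable lengths $\{1,13,17,18,19\}\cup[23,\infty)$ is not closed under addition of its elements, so no junction-type analysis can rescue the claim for arbitrary $p,q$; you would need to prove the restricted version ($q=13$, $p\ge 23$, for your specific normalized morphisms), and that is precisely the content you have not supplied. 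The fallback citation to Brandenburg is also not safe as written: Brandenburg's results concern the existence of uniformly growing power-free morphisms, not a statement that every sufficiently large length is achievable, and in any case you would need an explicit threshold below $36$, which you do not verify. Currie's actual argument is of a different character --- he constructs, for each admissible length, a squarefree morphism whose three images are permutations of a single word, and proves a general sufficient condition tailored to that family --- so the infinite tail really does require a dedicated construction rather than an additive bootstrapping from the finite data.
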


\begin{proof}
See Currie \cite{Currie:2013}.
\end{proof}

\begin{lemma}\label{lem:struct73}
Let ${\bf x}$ be an infinite $7/3$-free word over $\{0,1\}$.
Then ${\bf x} = p\mu({\bf y})$ for some $p \in \{\epsilon,0,00,1,11\}$
and some infinite $7/3$-free word ${\bf y}$.
\end{lemma}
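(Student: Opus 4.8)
The plan is to ``desubstitute'' $\mathbf{x}$ with respect to the Thue--Morse morphism $\mu$, defined by $\mu(0)=01$ and $\mu(1)=10$, after deleting a bounded prefix. The observation underlying everything is this: an infinite binary word $\mathbf{z}$ lies in $\mu(\{0,1\}^\omega)$ exactly when $\mathbf{z}[2t-1]\neq\mathbf{z}[2t]$ for every $t\ge 1$; this is the same as saying that every run (maximal block of equal letters) of $\mathbf{z}$ has length at most $2$ and every run of length $2$ begins at an even position, and in that case $\mathbf{z}=\mu(\mathbf{y})$ with $y_t=\mathbf{z}[2t-1]$. So the task is to produce $p\in\{\epsilon,0,00,1,11\}$ such that in $\mathbf{x}'=p^{-1}\mathbf{x}$ (the word obtained from $\mathbf{x}$ by deleting the prefix $p$) every length-$2$ run begins at an even position, and then to verify that the resulting $\mathbf{y}$ is $7/3$-free.

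First I would record the local consequences of $7/3$-freeness that drive the argument. Since $000$ and $111$ have exponent $3$, every run of $\mathbf{x}$ has length $1$ or $2$. Since $01010$ and $10101$ have exponent $5/2$, every maximal alternating factor of $\mathbf{x}$ has length at most $4$; hence between two consecutive length-$2$ runs there are at most two length-$1$ runs, and the first length-$2$ run of $\mathbf{x}$ begins at position at most $4$. In particular $\mathbf{x}$ is not eventually alternating, so it has infinitely many length-$2$ runs.

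The crux is the following parity statement: \emph{if $i<j$ are starting positions of length-$2$ runs of $\mathbf{x}$, then $i\equiv j\pmod 2$, with the single possible exception of a disagreement involving the very first length-$2$ run.} I would prove this by a bounded case analysis. If two consecutive length-$2$ runs begin at positions of opposite parity, then by the previous paragraph there is exactly one length-$1$ run between them, so, up to exchanging the roles of $0$ and $1$, the relevant factor is $00\,1\,00$; maximality of the runs forces the two neighboring letters, producing $1001001$, and from there every letter one is permitted to prepend or append is itself forced (for instance the prepended letter must be $1$, since $01001001$ has exponent $8/3$), so that after boundedly many steps every branch of the analysis leads to a factor of exponent exceeding $7/3$, except when the configuration abuts the start of $\mathbf{x}$, in which case it survives but is absorbed into $p$. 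I expect this case analysis to be the main obstacle: the windows to inspect are short but the bookkeeping is delicate, and it is in essence the local structure theory of $7/3$-free binary words from \cite{Karhumaki&Shallit:2004}.

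Granting the parity statement, all length-$2$ runs of $\mathbf{x}$ other than (possibly) the first have starting positions of a common parity $\pi$. Since the first length-$2$ run begins at position at most $4$ and is the only one whose parity can deviate, one checks, over the finitely many ways a $7/3$-free word can begin, that a suitable $p\in\{\epsilon,0,00,1,11\}$ places every length-$2$ run of $\mathbf{x}'=p^{-1}\mathbf{x}$ at an even position (a prefix $01$ or $10$ is never needed, being already $\mu(0)$ or $\mu(1)$). By the opening observation, $\mathbf{x}'=\mu(\mathbf{y})$ for the infinite word $\mathbf{y}$ with $y_t=\mathbf{x}'[2t-1]$. Finally, $\mathbf{y}$ is $7/3$-free: if $f$ were a factor of $\mathbf{y}$ of length $n$ with least period $q$ and $n/q>7/3$, then $\mu(f)$ would be a factor of $\mathbf{x}'$ of length $2n$ and period $2q$, so $\exp(\mu(f))\ge 2n/(2q)=n/q>7/3$, contradicting the $7/3$-freeness of $\mathbf{x}$. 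Hence $\mathbf{x}=p\mu(\mathbf{y})$ with $\mathbf{y}$ infinite and $7/3$-free, as required.
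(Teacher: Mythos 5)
Your argument is correct in substance, but it takes a genuinely different route from the paper: the paper does not prove this lemma at all, it simply cites it as Theorem~1 of Rampersad--Shallit--Shur (2011), which in turn rests on the finitary factorization theorem of Karhum\"aki--Shallit (2004, Theorem~6). What you have written is essentially a self-contained reproof of that cited result via the classical desubstitution/run-parity argument going back to Thue's analysis of overlap-free words, and the skeleton (runs of length at most $2$, alternating blocks of length at most $4$, the parity claim for starting positions of length-$2$ runs, absorption of the unique exceptional prefix into $p$, and exponent preservation under $\mu^{-1}$) is sound. Two remarks. First, the case analysis you flag as the main obstacle is shorter than you fear: under the paper's convention that ``$7/3$-free'' means \emph{no factor of exponent $\ge 7/3$}, the configuration $00\,1\,00$ with a letter on each side is immediately forced to $1001001$, which has exponent exactly $7/3$ and is therefore already forbidden; no further rounds of forced extension are needed, and the only surviving case is when $00100$ (or $11011$) is a prefix of $\mathbf{x}$, which is precisely where $p=00$ (or $p=11$) is consumed. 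Second, in your final step you only exclude factors of $\mathbf{y}$ with exponent $>7/3$, which shows $\mathbf{y}$ is $(7/3)^+$-free; to get $7/3$-freeness in the paper's sense you must run the identical computation with $n/q\ge 7/3$, which works verbatim since a factor $\mu(f)$ of $\mathbf{x}$ of exponent $\ge 7/3$ is equally forbidden. The trade-off is clear: your route makes the lemma self-contained at the price of a short combinatorial analysis, while the paper outsources that analysis to the literature.
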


\begin{proof}
This is \cite[Theorem~1]{Rampersad&Shallit&Shur:2011}, which is in turn
a consequence of the finitary result \cite[Theorem~6]{Karhumaki&Shallit:2004}.
\end{proof}

\begin{corollary}\label{cor:large_tm_blocks}
Let ${\bf x}$ be an infinite $7/3$-free word over $\{0,1\}$.
Then ${\bf x}$ contains the factors $\mu^i(0)$ and $\mu^i(1)$
for all $i \geq 0$.
\end{corollary}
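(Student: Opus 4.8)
The plan is to prove the corollary by induction on $i$, using Lemma~\ref{lem:struct73} to drive the inductive step.

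First I would dispose of the base case $i=0$: here $\mu^0(0)=0$ and $\mu^0(1)=1$, so the assertion is merely that every infinite $7/3$-free binary word contains both letters. This is immediate, since the only infinite binary words omitting a letter are $0^\omega$ and $1^\omega$, and neither is $7/3$-free (each has factors of arbitrarily large exponent); hence an infinite $7/3$-free word must use both $0$ and $1$.

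For the inductive step, I would assume the statement for some $i\geq 0$ and take an arbitrary infinite $7/3$-free word ${\bf x}$ over $\{0,1\}$. By Lemma~\ref{lem:struct73}, ${\bf x}=p\,\mu({\bf y})$ for some $p\in\{\epsilon,0,00,1,11\}$ and some infinite $7/3$-free word ${\bf y}$. Applying the induction hypothesis to ${\bf y}$ gives occurrences of $\mu^i(0)$ and of $\mu^i(1)$ as factors of ${\bf y}$. Since $\mu$ is a morphism, an occurrence of a factor $u$ in ${\bf y}$ maps to an occurrence of $\mu(u)$ in $\mu({\bf y})$; taking $u=\mu^i(0)$ and $u=\mu^i(1)$ shows that $\mu^{i+1}(0)$ and $\mu^{i+1}(1)$ are factors of $\mu({\bf y})$, and hence of ${\bf x}$, because $\mu({\bf y})$ is a suffix of ${\bf x}$. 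This completes the induction.

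I do not expect any real obstacle: all the substance is hidden in Lemma~\ref{lem:struct73}, and the only thing to check at each stage is that the word ${\bf y}$ it produces is again infinite and $7/3$-free, which is exactly what the lemma guarantees. The prefix $p$ plays no role beyond ensuring that $\mu({\bf y})$ — and therefore every factor of it — occurs inside ${\bf x}$.
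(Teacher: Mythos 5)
Your proof is correct and is exactly the argument the paper intends by its one-line justification ``this follows by iteration of Lemma~\ref{lem:struct73}'': an induction whose step decomposes ${\bf x}=p\,\mu({\bf y})$ and pushes the factors $\mu^i(0),\mu^i(1)$ of ${\bf y}$ forward through $\mu$. No differences worth noting.
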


\begin{proof}
This follows by iteration of Lemma~\ref{lem:struct73}.
\end{proof}

\begin{lemma}\label{lem:lexleast73}
The lexicographically least infinite $7/3$-free word over $\{0,1\}$ starting with $1$ is
$\overline{\bf t}$ (i.e., the complement of the Thue--Morse word ${\bf t}$).
\end{lemma}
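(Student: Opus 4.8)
The strategy is to exploit the self-similar structure of $7/3$-free words given by Lemma~\ref{lem:struct73}, together with the fact that $\overline{\bf t}$ is a fixed point of the Thue--Morse morphism $\mu$ ($0\mapsto 01$, $1\mapsto 10$): indeed $\mu({\bf t})={\bf t}$ and $\mu$ commutes with complementation, so $\mu(\overline{\bf t})=\overline{\bf t}$. Let $\mathcal{S}$ denote the set of infinite $7/3$-free words over $\{0,1\}$ beginning with $1$; it is nonempty, since $\ce(\overline{\bf t})=\ce({\bf t})=2<7/3$ gives $\overline{\bf t}\in\mathcal{S}$. Let $L$ denote the lexicographically least element of $\mathcal{S}$. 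Since $\overline{\bf t}\in\mathcal{S}$ we have $L\le\overline{\bf t}$, and we must show $L=\overline{\bf t}$. For ${\bf w}\in\mathcal{S}$ with ${\bf w}\ne\overline{\bf t}$, write $d({\bf w})$ for the least index at which ${\bf w}$ and $\overline{\bf t}$ disagree; as both begin with $1$, we have $d({\bf w})\ge 1$.

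The easy preliminary observation is that every ${\bf w}\in\mathcal{S}$ with ${\bf w}<\overline{\bf t}$ begins with $100$: since $\overline{\bf t}=100\cdots$, having $w_1=1$ or $w_2=1$ would make ${\bf w}$ lexicographically larger than $\overline{\bf t}$. The main step is the descent claim: \emph{if ${\bf w}\in\mathcal{S}$ and ${\bf w}<\overline{\bf t}$, then there is ${\bf y}\in\mathcal{S}$ with ${\bf y}<\overline{\bf t}$ and $d({\bf y})=d({\bf w})/2$.} To prove it, apply Lemma~\ref{lem:struct73} to the infinite $7/3$-free word ${\bf w}$ and write ${\bf w}=p\mu({\bf y})$ with $p\in\{\epsilon,0,00,1,11\}$ and ${\bf y}$ infinite $7/3$-free. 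Since ${\bf w}$ begins with $100$, the prefix $p$ cannot be $0$ or $00$ (as ${\bf w}$ begins with $1$), nor $11$ (as ${\bf w}$ begins with $10$); and $p=1$ is impossible too, for then $\mu({\bf y})$ would have to begin with $00$, whereas every word in the image of $\mu$ begins with $01$ or $10$. Hence $p=\epsilon$, so ${\bf w}=\mu({\bf y})$, and $y_0=1$ because ${\bf w}$ begins with $1$; thus ${\bf y}\in\mathcal{S}$.

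It remains to compare ${\bf w}=\mu({\bf y})$ with $\overline{\bf t}=\mu(\overline{\bf t})$ blockwise. The two possible blocks $\mu(0)=01$ and $\mu(1)=10$ differ in both coordinates, and $\mu(a)$ begins with the letter $a$. Hence, if $m$ is the least index with $y_m\ne\overline{t}_m$, the words ${\bf w}$ and $\overline{\bf t}$ agree on positions $0,\dots,2m-1$ and disagree at position $2m$, so $d({\bf w})=2m$; moreover ${\bf w}<\overline{\bf t}$ forces the initial letter $y_m$ of $\mu(y_m)$ to be smaller than the initial letter $\overline{t}_m$ of $\mu(\overline{t}_m)$, i.e. $y_m=0<1=\overline{t}_m$, so ${\bf y}<\overline{\bf t}$ with $d({\bf y})=m=d({\bf w})/2$. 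This proves the descent claim. Finally, if $L<\overline{\bf t}$, then iterating the descent claim from ${\bf w}=L$ produces words ${\bf y}_1,{\bf y}_2,\dots\in\mathcal{S}$ with $d(L)>d({\bf y}_1)>d({\bf y}_2)>\cdots$, an infinite strictly decreasing sequence of positive integers --- impossible. Therefore $L=\overline{\bf t}$.

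I expect the only delicate point to be this blockwise comparison: one must verify that the first disagreement between $\mu({\bf y})$ and $\mu(\overline{\bf t})$ occurs at an \emph{even} position (at an odd position the two half-blocks would coincide and hence so would the whole blocks, a contradiction) and that the induced inequality on ${\bf y}$ points the right way. Both are one-line checks on the images $01$ and $10$; everything else is either immediate or a direct appeal to Lemma~\ref{lem:struct73} and standard facts about the Thue--Morse word.
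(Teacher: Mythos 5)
Your proof is correct. It rests on the same skeleton as the paper's: decompose the candidate word via Lemma~\ref{lem:struct73}, rule out the nonempty prefixes $p$ using the fact that the word must begin with $100$, and then exploit the interaction of $\mu$ with lexicographic order. Where you diverge is in how you close the argument. The paper simply asserts that ``$\mu$ preserves lexicographic order, so ${\bf x}'={\bf x}$''; unpacked, this uses minimality twice and implicitly requires that $\mu$ maps $7/3$-free words to $7/3$-free words (so that $\mu({\bf x})$ is again a competitor), a standard but uncited fact. Your infinite descent on $d(\cdot)$, the index of first disagreement with $\overline{\bf t}$, avoids that ingredient entirely: you only need the decomposition of Lemma~\ref{lem:struct73} and the observation that the blocks $01$ and $10$ first differ in their initial letters, which forces $d(\mu({\bf y}))=2\,d({\bf y})$ and transfers the inequality downward. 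A side benefit is that your argument shows directly that no word of $\mathcal{S}$ is lexicographically below $\overline{\bf t}$, so you never actually need to know that a lexicographically least element exists (a compactness point both you and the paper gloss over). The only cosmetic omission is noting that the index $m$ in your blockwise comparison exists, i.e., that ${\bf y}\neq\overline{\bf t}$; this is immediate since ${\bf y}=\overline{\bf t}$ would give ${\bf w}=\mu(\overline{\bf t})=\overline{\bf t}$, contradicting ${\bf w}<\overline{\bf t}$.
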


\begin{proof}
Let ${\bf x}$ be the lexicographically least infinite $7/3$-free word over $\{0,1\}$ starting with $1$.
By Lemma~\ref{lem:struct73}, we have
\begin{align*}
{\bf x} &= 1001011001101001 \cdots\\
&= \mu(10010110\cdots)\\
&= \mu({\bf x'}),
\end{align*}
where ${\bf x'}$ is $7/3$-free.  However, the morphism $\mu$ preserves  lexicographic order,
so we must have ${\bf x'}={\bf x}$.  Hence, we have ${\bf x}=\mu({\bf x})$,
which implies that ${\bf x}=\overline{\bf t}$.
\end{proof}

For more on lexicographically extremal $7/3$-free words, see \cite{Rampersad&Shallit&Shur:2011}.

\bigskip

We recall the notion of {\it orbit closure\/} of an infinite sequence $\bf x$:
this is the set of all infinite sequences $\bf y$
such that every finite prefix of $\bf y$ appears somewhere in $\bf x$.
\begin{lemma}\label{lem:least_orbit_closure}
Let ${\bf x}$ be a generalized automatic sequence in an addable
numeration system and let $w$ be a finite word.
The lexicographically least sequence
in the orbit closure of ${\bf x}$ beginning with $w$ is also a generalized automatic sequence for the same numeration
system as ${\bf x}$.
\end{lemma}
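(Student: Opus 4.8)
The plan is to give an explicit, symbol-by-symbol description of the target sequence in terms of $\bf x$, and then observe that this description is expressible by a first-order formula; the conclusion then follows because, for the numeration systems in question, first-order definable sequences are automatic. First recall that the orbit closure of $\bf x$ equals the subshift $X_{\bf x}$ consisting of all infinite words every prefix of which is a factor of $\bf x$ --- a sequence is a limit of shifts of $\bf x$ exactly when each of its prefixes occurs in $\bf x$. If $w$ is not a factor of $\bf x$ there is nothing to prove, so assume $w$ occurs in $\bf x$; then the set of elements of $X_{\bf x}$ that begin with $w$ is a nonempty closed subset of $\{0,1\}^\omega$, and hence has a lexicographically least element $\bf y$ by compactness.

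Next I would establish the following description of $\bf y$: for every $\ell \ge |w|$, the length-$\ell$ prefix of $\bf y$ is the lexicographically least factor of $\bf x$ of length $\ell$ that begins with $w$. The point is that these least prefixes are prefix-compatible: if $u$ is the lex-least length-$\ell$ factor beginning with $w$, then, since every factor of an infinite word is right-extendable, $u$ extends to a length-$(\ell+1)$ factor, and a short argument shows the lex-least length-$(\ell+1)$ factor beginning with $w$ must have $u$ as a prefix. Their common limit lies in $X_{\bf x}$, begins with $w$, and --- comparing first positions of disagreement --- is lexicographically least among such words, hence equals $\bf y$.

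Then comes the translation into logic. For the numeration systems relevant here (base $k$, and the Fibonacci and Tribonacci systems), a sequence $(a_n)_{n\ge 0}$ is generalized automatic precisely when, for each letter $c$, the set $\{n : a_n = c\}$ is definable in the associated logical structure (the natural numbers with addition, order, and the numeration-system predicates), and this class of relations is closed under Boolean connectives and first-order quantification. Since $\bf x$ is generalized automatic, ``${\bf x}[i]=c$'' is definable, and hence so is the lexicographic comparison ``${\bf x}[i..i+\ell] \le {\bf x}[i'..i'+\ell]$'' of two equal-length blocks of $\bf x$ (quantify over the first position where they differ). Now ``${\bf y}[m]=c$'' is expressed as: either $m<|w|$ and $w[m]=c$ (a fixed finite disjunction, since $w$ is fixed), or $m\ge|w|$ and there is an $i$ with (i) ${\bf x}[i+j]=w[j]$ for all $j<|w|$, (ii) ${\bf x}[i..i+m] \le {\bf x}[i'..i'+m]$ for every $i'$ satisfying ${\bf x}[i'+j]=w[j]$ for all $j<|w|$, and (iii) ${\bf x}[i+m]=c$. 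This is first order, so $\bf y$ is generalized automatic for the same numeration system.

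The only part with genuine content --- as opposed to routine bookkeeping --- is the second step: verifying that the locally greedy construction does not stall but in fact produces the global lexicographic minimum of the orbit closure. This is where right-extendability of factors of an infinite word is used, together with the identification of the orbit closure with the factor-defined subshift $X_{\bf x}$. Everything else is mechanical: unwinding the lexicographic comparison and the ``begins with $w$'' constraint into first-order formulas, and recalling that ``generalized automatic'' is to be understood relative to a numeration system for which definable sequences are recognizable --- as is the case for all the systems occurring in this paper.
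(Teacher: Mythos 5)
Your proof is correct and follows essentially the same route as the paper, which simply delegates the argument to the \texttt{Walnut}/first-order-logic method of \cite{Allouche&Rampersad&Shallit:2009} and \cite[Theorem~8.11.1]{Shallit:2023}: identify the orbit closure with the factor-defined subshift, characterize the lexicographically least element prefix-by-prefix, and express the resulting predicate in the first-order theory of the numeration system. You have merely written out in full the details that the paper leaves to its references.
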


\begin{proof}
This can be shown using {\tt Walnut} (a theorem-prover 
for automatic sequences \cite{Mousavi:2016,Shallit:2023}) 
by an easy modification of the approach described in \cite{Allouche&Rampersad&Shallit:2009} or
\cite[Theorem~8.11.1]{Shallit:2023}.   
\end{proof}

We can now prove our first main result, Theorem~\ref{thm1}.
\begin{proof}
\leavevmode

(a) If $k$ is a power of $2$, then by Lemma~\ref{lemma3}, a $2$-automatic sequence is also a $2^i$-automatic sequence for all $i \geq 1$.  So it suffices to prove the result for $k = 2$.  But then $\bf t$ is a $2$-automatic sequence that avoids overlaps, and it is easy to see that no binary word of length $>3$ can avoid squares.

\bigskip

(b) Suppose $k$ is not a power of $2$.   Suppose
there is some ${\bf x} \in \auts_k$ with
$\ce({\bf x}) = \alpha < 7/3$.   
Then, by Corollary~\ref{cor:large_tm_blocks}, we know that 
$\bf x$ contains $\mu^i(0)$ as a factor for arbitrarily large $i$.   
Therefore the orbit closure of $\bf x$
contains all sequences in the orbit closure of $\bf t$.
By Lemma~\ref{lem:lexleast73}, the lexicographically smallest $7/3$-free sequence starting with $1$ is
$\overline{\bf t}$, and hence the lexicographically smallest sequence in the orbit closure of ${\bf x}$ starting with $1$ is also
$\overline{\bf t}$.  It now follows from Lemma~\ref{lem:least_orbit_closure} that $\overline{\bf t}$ is $k$-automatic.
Then, by a theorem of Cobham \cite{Cobham:1969}, we know 
(since $k$ and $2$ are multiplicatively independent) that 
$\overline{\bf t}$ must be ultimately periodic, a 
contradiction.

Therefore $\ce({\bf x}) \geq 7/3$.   It remains to show that there is a word of critical exponent $7/3$ for all the relevant $k$.

By combining Lemmas~\ref{lemma4}, \ref{lemma2}, and \ref{lemma1}, we see that the desired result holds
for all $k \geq 23$, and also for $k=13,17,18,19$.  
Thus, the only remaining cases are $$k=3,5,6,7,9,10,11,12,14,15,20,21,22.$$
We now treat
each of these cases. The strategy is to find, for each 
of these values of $k$, a $k$-uniform morphism over $\Sigma_4$ whose fixed point
is squarefree, and then apply Lemma~\ref{lemma2}.
In some cases, the morphism we find is actually over
$\Sigma_3$.   It is not necessary to treat $k = 9$,
since it is implied by the result for $k = 3$ by Lemma~\ref{lemma3}.

We found the appropriate morphisms through either breadth-first or depth-first search.   
Table~\ref{tab1} gives the images of the morphisms we found.
\begin{table}[H]
    \centering
    \resizebox{.25\columnwidth}{!}{%
    \begin{tabular}{c|r}
       $k$ & morphism \\
        \hline
       3 & $0 \ra 021$ \\
         & $1 \ra 031$ \\
         & $2 \ra 012$ \\
         & $3 \ra 013$\\
        \hline
        5 & $0 \ra 01321$ \\
          & $1 \ra 01231$ \\
          & $2 \ra 02031$ \\
          & $3 \ra 02321$ \\
        \hline
        6 & $0 \ra 013121$ \\
          & $1 \ra 013031$ \\
          & $2 \ra 013231$ \\
          & $3 \ra 012131$ \\
        \hline
        7  & $0 \ra 0123021$ \\
           & $1 \ra 0123121$ \\
           & $2 \ra 0120321$ \\
           & $3 \ra 0121321$ \\
          \hline
        10 & $0 \ra 0120131021 $ \\
 & $1 \ra 0120231021 $ \\
 &  $2 \ra 0120301021 $ \\
 & $3 \ra 0120312021$ \\
 \hline
       11 & $0 \ra 01201020121 $ \\
 & $1 \ra 01202120121 $ \\
& $2 \ra 02010212021 $\\
\hline 
12 & $0 \ra 012021201021$ \\
& $1 \ra 012010212021 $ \\
& $2 \ra 012102120121$\\
\hline
14 & $0 \ra 01201023201021 $ \\
& $1 \ra 
 01201030201021 $ \\
& $2 \ra 
 01201031201021 $ \\
& $3 \ra
 01201020301021 $\\
 \hline
 15 & $0 \ra 012010230201021 $ \\
 & $1 \ra 012010231201021 $ \\
 & $2 \ra 012010203201021 $ \\
 & $3 \ra 012010213201021$\\
 \hline
 20 & $0 \ra 01201020120210201021 $ \\
 &  $1 \ra 01201021201210201021 $ \\
 & $2 \ra 01201020121021201021$ \\
 \hline
 21 & $0 \ra 012010201230210201021 $ \\
 & $1 \ra 012010201231210201021 $ \\
 & $2 \ra 012010201203210201021 $ \\
 & $3 \ra 012010201213210201021$\\
 \hline
 22 & $0 \ra 0120102012030210201021$ \\
 & $1 \ra 0120102012130210201021 $\\
 &  $2 \ra 0120102012320210201021 $ \\
 & $3 \ra 0120102012301210201021 $
 \end{tabular}
 }
    \caption{Morphisms generating squarefree words by iteration.}
    \label{tab1}
\end{table}

We can prove that each of these words are squarefree using the following model for $k =3$:
\begin{verbatim}
morphism ff3 "0->021 1->031 2->012 3->013":
promote FF3 ff3:
eval test3 "?msd_3 ~Ei,n (n>0) & Au,v (u>=i & u<i+n & v=u+n) => FF3[u]=FF3[v]":
\end{verbatim}
And {\tt Walnut} returns {\tt TRUE} for all of them.
\end{proof}









Since the construction involving the morphism $h$ does not necessarily produce an automatic sequence with the smallest possible DFAO for a $k$-automatic $(7/3)^+$-free infinite binary word,
it may be of interest to try to determine the number of states in a smallest DFAO.  
Lower bounds can be found by breadth-first search; upper bounds can be found by applying the construction involving $h$ or other means.
Table~\ref{tab2} summarizes what we currently know.  An asterisk denotes those entries where
the exact value is known; intervals denote
known lower and upper bounds.  The automata can be constructed straightforwardly from the expression as the iteration of a fixed point starting with $0$, followed by the
specified coding.
\begin{table}[htb]
    \centering
    \resizebox{.41\columnwidth}{!}{%
    \begin{tabular}{c|c|l}
       & number of &  \\
       $k$ & states $s$ & automaton \\
       \hline
       $2$ & $2$* & $0 \ra 01$ \\
                 && $1 \ra 10$\\
                \hline
       $3$ & $8 \leq s \leq 16$ & See below.\\[2pt]
       \hline
       $4$ & $2$* & $0 \ra 0110$ \\
                  && $1 \ra 1001$ \\
                \hline
       $5$ & $4$*  & $0 \ra 01230$ (0)\\
                   && $1 \ra 20120$ (0) \\
                   && $2 \ra 23012$ (1) \\
                   && $3 \ra 02302$ (1) \\
                   \hline
       $6$ & $6 \leq s \leq 27$ & Apply $h$ to word generated by  \\
       && morphism for $k=6$ in
       Table~\ref{tab1}.\\
       \hline
       $7$ & $4$*  & $0 \ra 0120330 $ (0)\\
                   &&$1 \ra 1211321$ (0) \\
                   &&$2 \ra 2112033$ (1) \\
                   &&$3 \ra 3013303$ (1) \\
                   \hline
       $8$ & $2$* & $0 \ra 01101001$ \\
                  && $1 \ra 10010110$ \\
                  \hline
       $9$ & $4$* & $0 \ra 012301312$ (0) \\
                  &&$1 \ra 312012312$ (0) \\
                  &&$2 \ra 301312312$ (1) \\
                  &&$3 \ra 013123012$ (1) \\
                  \hline
       $10$ & $4$* & $0 \ra 0120132132$ (0) \\
                  &&$1 \ra 0121321201$ (0) \\
                  &&$2 \ra 3212012132$ (1) \\
                  &&$3 \ra 3201213201$ (1) \\
                  \hline
       $11$ & $4$* & $0 \ra 01202301230$ (0) \\
                  && $1 \ra 20120230201$ (0) \\
                  && $2 \ra 23012012302$ (1) \\
                  && $3 \ra 30123020123$ (1)
    \end{tabular}
    }
    \caption{Minimum number of states
    needed to produce a $k$-automatic
    $(7/3)^+$-free infinite binary word.}
    \label{tab2}
    \end{table}

To get a $16$-state DFAO {\tt D3} for base $3$, use the following {\tt Walnut} commands:
\begin{verbatim}
morphism d9 "0->012301312 1->312012312 2->301312312 3->013123012":
promote D d9:
morphism code "0->0 1->0 2->1 3->1":
image D9 code D:
eval test9a "?msd_9 ~Ei,p (p>0) & Au (u>=i & 3*u<=3*i+4*p) => D9[u]=D9[u+p]":
# tests that morphism does indeed generate a (7/3)+-free word
convert D3 msd_3 D9: 
\end{verbatim}
These commands take a DFAO for a $9$-automatic word and compute a DFAO for inputs in base $3$.

\section{Fibonacci- and Tribonacci-automatic sequences}

In the Zeckendorf or Fibonacci numeration system \cite{Lekkerkerker:1952,Zeckendorf:1972},
we write every natural number $n$ uniquely as a sum of Fibonacci numbers $F_i$ for $i \geq 2$, subject to the condition that we never use both $F_i$ and $F_{i+1}$ in the sum. The representation can be viewed as a binary word, where we write $1$ if $F_i$ is included and $0$ otherwise.  A sequence $(a_n)_{n \geq 0}$ is said to be {\it Fibonacci-automatic} if there is a DFAO that on input $n$ expressed in the Zeckendorf numeration system reaches a state with output $a_n$ \cite{Mousavi&Schaeffer&Shallit:2016,Du&Mousavi&Rowland&Schaeffer&Shallit:2017,Du&Mousavi&Schaeffer&Shallit:2016}.  We let $\auts_F$ denote the set of all binary Fibonacci-automatic sequences.   The following is our second main result.
\begin{theorem}
We have $\rt(\auts_F) = 7/3$.
\label{thm11}
\end{theorem}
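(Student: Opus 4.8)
The plan is to prove $\rt(\auts_F) = 7/3$ in two halves: a lower bound $\rt(\auts_F) \geq 7/3$ and a matching upper bound. For the lower bound I would argue exactly as in part (b) of Theorem~\ref{thm1}. Suppose for contradiction that some Fibonacci-automatic binary sequence ${\bf x}$ has critical exponent $\alpha < 7/3$; then ${\bf x}$ is $7/3$-free, so by Corollary~\ref{cor:large_tm_blocks} it contains $\mu^i(0)$ as a factor for arbitrarily large $i$, hence its orbit closure contains the entire orbit closure of ${\bf t}$. By Lemma~\ref{lem:lexleast73} the lexicographically least $7/3$-free word starting with $1$ is $\overline{\bf t}$, so $\overline{\bf t}$ is the lexicographically least element of the orbit closure of ${\bf x}$ beginning with $1$; by Lemma~\ref{lem:least_orbit_closure} this forces $\overline{\bf t}$ to be Fibonacci-automatic. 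But $\overline{\bf t}$ (equivalently ${\bf t}$, up to complementation) is $2$-automatic and aperiodic, and by a Cobham-type theorem the only sequences that are both Fibonacci-automatic and $k$-automatic for some $k$ are the ultimately periodic ones --- a contradiction. (Here I would cite the appropriate generalization of Cobham's theorem to the Fibonacci numeration system; this is the one place where the argument genuinely differs from the base-$k$ case, since one must invoke the Durand--Bell version of Cobham's theorem for Pisot numeration systems rather than classical Cobham.)

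For the upper bound I need to exhibit a single binary Fibonacci-automatic sequence with critical exponent exactly $7/3$. The natural strategy is to take a suitable squarefree (or at least $7/3^+$-free precursor) sequence that is Fibonacci-automatic over a small alphabet and push it through the morphism $h$ of Lemma~\ref{lemma2}, which maps squarefree words over $\Sigma_4$ to $(7/3)^+$-free binary words. To make this work I would first produce a squarefree sequence over $\Sigma_3$ or $\Sigma_4$ that is Fibonacci-automatic: one clean candidate is the image under a coding of a fixed point of the Fibonacci morphism (or of some related morphism on a Fibonacci-compatible alphabet) that happens to be squarefree; such a sequence is automatically Fibonacci-automatic by the Fibonacci analogue of Lemma~\ref{lemma0}. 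Then, since $h$ is $21$-uniform, $h$ applied to a Fibonacci-automatic sequence is again Fibonacci-automatic (the Fibonacci analogue of Lemma~\ref{lemma1}, which holds because uniform morphic images of automatic sequences stay automatic in any Pisot numeration system). This gives a $(7/3)^+$-free Fibonacci-automatic binary word ${\bf w}$, so $\ce({\bf w}) \leq 7/3$; combined with the lower bound, $\ce({\bf w}) = 7/3$ and $\rt(\auts_F) = 7/3$.

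Alternatively --- and this is probably what the authors actually do --- one can skip the search for a Fibonacci-automatic squarefree sequence and instead use \texttt{Walnut} directly: guess an explicit DFAO (over the Zeckendorf numeration system) producing a binary word, and verify with \texttt{Walnut} that the word it generates has no factor of exponent exceeding $7/3$, via the predicate asserting the nonexistence of $i, p$ with $p > 0$ and $3u \leq 3i + 4p$ implying equal values (the same $(7/3)^+$-freeness test used for base $3$ in the previous section). One would display such a DFAO and report that \texttt{Walnut} returns \texttt{TRUE}. The verification also needs to confirm that the word really does attain exponent $7/3$ (so that the critical exponent is not strictly smaller), which is a routine finite check.

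The main obstacle is the lower bound's appeal to a Cobham-type theorem: I must make sure the correct statement is invoked, namely that a sequence automatic in both the Fibonacci system and some integer base $k \geq 2$ is ultimately periodic. This is a theorem of Durand (building on Fabre, Bell, and others) about automaticity in substitutive/Pisot numeration systems, and the fact that the golden ratio is a Pisot number multiplicatively independent from every integer is what makes it apply; I would state this carefully and cite it, as the classical Cobham theorem (for two multiplicatively independent integer bases) does not directly cover this mixed case. Everything else --- the orbit-closure argument, the structural lemmas, and the \texttt{Walnut} verifications --- transfers mechanically from the base-$k$ proof already given.
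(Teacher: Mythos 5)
Your proposal matches the paper's proof in both halves: the lower bound is the same orbit-closure/lexicographically-least argument with Durand's substitution version of Cobham's theorem replacing classical Cobham (the paper invokes the morphic characterization of Fibonacci-automatic sequences to view $\overline{\bf t}$ as both $2$-substitutive and $\frac{1}{2}(1+\sqrt 5)$-substitutive, these being multiplicatively independent), and the upper bound is exactly your first strategy --- an explicit squarefree Fibonacci-automatic sequence over $\Sigma_4$ (given by a DFAO found by search and verified squarefree with {\tt Walnut}) pushed through the $21$-uniform morphism $h$ of Lemma~\ref{lemma2}. The only cosmetic difference is that the paper's squarefree precursor is presented directly as a DFAO rather than as a fixed point of a Fibonacci-like morphism.
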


\begin{proof}
Let $\bf x$ be Fibonacci-automatic.
The possibility 
$\ce({\bf x}) < 7/3$ is ruled out by the same argument 
as in the proof of Lemma~\ref{lem:least_orbit_closure} (b) 
above.   
Namely, the automaton gives $\bf x$ 
as the image, under a coding, of a fixed 
point of some morphism $\varphi$ (as in Example~1 in
\cite[Section~5]{Shallit:1988}). As in the base-$k$ case,
the dominant 
eigenvalue of the incidence matrix of $\varphi$ is
$\alpha = \frac{1}{2}(1 + \sqrt{5})$.  
An easy induction proves that 
$\alpha^i = L_i/2 + F_i \sqrt{5}/2$, where $L_i$ is the $i$'th
Lucas number, so $\alpha$ and $2$ are multiplicatively independent.
Then a theorem of Durand \cite{Durand:2011}
finishes this part of the proof.

So it suffices to give an example achieving the 
exponent $7/3$. The easiest way to do this is to find a 
Fibonacci-automatic squarefree sequence $\bf y$ over a 
$4$-letter alphabet, and then apply the morphism $h$ to 
$\bf y$.  Such a sequence is computed by the DFAO in 
Figure~\ref{fig1}.
\begin{figure}[htb]
\begin{center}
\includegraphics[width=6in]{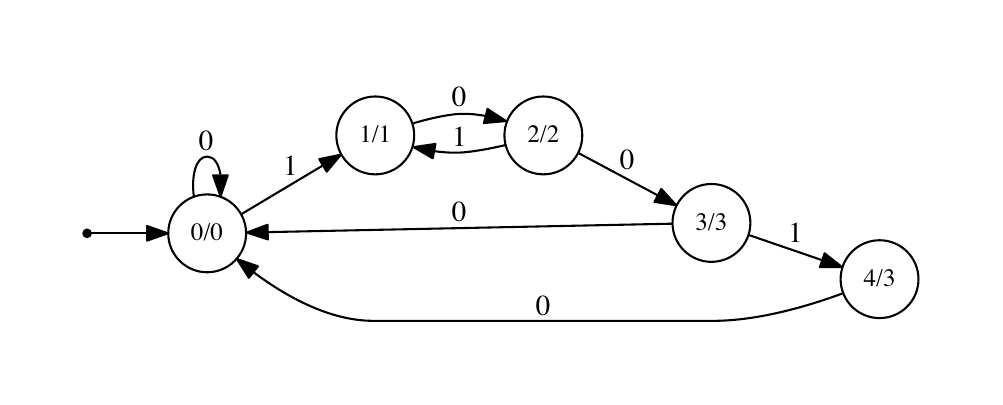}
\end{center}
\caption{Fibonacci automaton computing the sequence $\bf y$.}
\label{fig1}
\end{figure}
Once we have saved the DFAO in Figure~\ref{fig1} in the {\tt Word Automata} directory of {\tt Walnut} under the name {\tt Y.txt}, we can verify that the sequence it generates is squarefree, as follows:
\begin{verbatim}
eval ysquf "?msd_fib ~Ei,n n>=1 & At (t<n) => Y[i+t]=Y[i+t+n]":
\end{verbatim}
and {\tt Walnut} returns {\tt TRUE}.
\end{proof}

\begin{remark}
By the way, the critical exponent of the word {\bf y} is
$(5+\sqrt{5})/4 \doteq 1.809$,
although we do not use this fact anywhere.   The reader can compare {\bf y} to the word ${\bf x}_4$ in
\cite{Rampersad&Shallit&Vandomme:2019}.

When we apply the morphism $h$ to $\bf y$, we get a DFAO of 943 states.  It can be computed via the following {\tt Walnut} code:
\begin{verbatim}
morphism ks 
 "0->011010011001001101001 1->100101100100110010110 
  2->100101100110110010110 3->011010011011001101001":
image YP ks Y:
\end{verbatim}

This naturally suggests the question, is there a smaller Fibonacci DFAO computing a $(7/3)^+$-free binary sequence?

There is such a sequence ${\bf w} = 01101001101100110100 \cdots$ generated by a DFAO of $32$ states.  Let $x^R$ denote the reversal of the finite word $x$.
We can construct $\bf w$ iteratively as follows:  define
\begin{align*}
    w_0 &= 011 \\
    w_1 &= 01101 \\
    w_n &= \begin{cases}
        w_{n-1} \overline{w_{n-2}}^R, & \text{if $n \geq 2$ and $n \equiv \modd{1,2} {3}$}; \\[.1in]
        w_{n-1} w_{n-2}^R, & \text{if $n \geq \modd{0} {3}$}.
    \end{cases}
\end{align*}
Clearly each $w_i$ is a prefix of all $w_j$ for $j \geq i$, so we can define $\bf w$ as the unique infinite word of which all the $w_i$ are prefixes.  It is not difficult to show, with {\tt Walnut}, that this word is generated 
by a  Fibonacci DFAO with $32$ states and is
$(7/3)^+$-free.  We omit the details.
This means the size of the smallest such DFA is at most $32$.   Breadth-first search shows it is at least $12$.
\end{remark}


One can also carry out the same sorts of computations for the Tribonacci-automatic sequences, where we use a numeration system based on the Tribonacci numbers \cite{Mousavi&Shallit:2015}.  This gives our final main result. 
\begin{theorem}
The repetition threshold for binary Tribonacci-automatic sequences is $7/3$.
\label{thm12}
\end{theorem}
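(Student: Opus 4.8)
The plan is to mirror the proof of Theorem~\ref{thm11} essentially verbatim, replacing the Fibonacci numeration system with the Tribonacci one throughout. The argument has two halves: a lower bound $\rt(\auts_T) \geq 7/3$ and a matching construction showing $\rt(\auts_T) \leq 7/3$.

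For the lower bound, suppose $\bf x$ is a Tribonacci-automatic sequence with $\ce({\bf x}) < 7/3$. Then $\bf x$ is $7/3$-free, so by Corollary~\ref{cor:large_tm_blocks} it contains $\mu^i(0)$ for all $i$, hence contains every sequence in the orbit closure of $\bf t$; in particular the lexicographically least $7/3$-free sequence in its orbit closure starting with $1$ is $\overline{\bf t}$ by Lemma~\ref{lem:lexleast73}. Lemma~\ref{lem:least_orbit_closure} then forces $\overline{\bf t}$ to be Tribonacci-automatic. To derive a contradiction I would invoke the morphic characterization of Tribonacci-automatic sequences — $\bf x$ is the letter-to-letter image of a fixed point of a morphism $\varphi$ whose incidence matrix has dominant eigenvalue equal to the Tribonacci constant $\beta \approx 1.8393$ (the real root of $X^3 = X^2 + X + 1$) — and then apply Durand's generalization of Cobham's theorem \cite{Durand:2011}, since $2$ and $\beta$ are multiplicatively independent (indeed $\beta$ is irrational, in fact not even a Pisot power of $2$). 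This yields that $\overline{\bf t}$ is ultimately periodic, a contradiction. Hence $\ce({\bf x}) \geq 7/3$.

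For the upper bound, the strategy is again to produce a Tribonacci-automatic squarefree sequence $\bf z$ over the four-letter alphabet $\Sigma_4$ and then apply the morphism $h$ from Lemma~\ref{lemma2}, which guarantees $h({\bf z})$ is $(7/3)^+$-free; since the image under a uniform morphism of a Tribonacci-automatic sequence is again Tribonacci-automatic (the Tribonacci analogue of Lemma~\ref{lemma1}), this gives a binary Tribonacci-automatic word of critical exponent exactly $7/3$. The concrete step is to exhibit a DFAO, with input read in Tribonacci (greedy, no-$111$) representation, computing such a $\bf z$, and to verify squarefreeness with {\tt Walnut} via a command of the form \texttt{eval zsquf "?msd\_trib \textasciitilde Ei,n n>=1 \& At (t<n) => Z[i+t]=Z[i+t+n]":} returning {\tt TRUE}. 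A candidate $\bf z$ can be taken from the known Tribonacci-automatic squarefree sequences over $\Sigma_4$, analogous to the Fibonacci example in Figure~\ref{fig1}.

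The main obstacle is the lower-bound step: one must confirm that the dominant eigenvalue of the incidence matrix of the generating morphism really is the Tribonacci constant (so that the hypothesis of Durand's theorem is met) and that Durand's theorem applies in the precise generality needed — namely to sequences automatic in a linear numeration system associated to a Pisot recurrence versus base-$k$ numeration. This is where the Tribonacci case differs in bookkeeping from the Fibonacci case, though not in substance, since the Tribonacci numeration system is a Bertrand numeration system attached to a Pisot number and thus falls squarely within Durand's framework. Everything else — the orbit-closure reasoning, the role of $\overline{\bf t}$, and the $h$-image construction — transfers with no change beyond replacing \texttt{msd\_fib} by \texttt{msd\_trib} in the {\tt Walnut} verifications.
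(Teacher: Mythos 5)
Your proposal matches the paper's proof essentially verbatim: the paper likewise handles the lower bound by referring back to the Theorem~\ref{thm11} argument (orbit closure, $\overline{\bf t}$, Durand's theorem applied with the Tribonacci constant in place of the golden ratio) and obtains the upper bound by applying the morphism $h$ of Lemma~\ref{lemma2} to a squarefree Tribonacci-automatic sequence over $\Sigma_4$ found by breadth-first search and verified squarefree with {\tt Walnut}. One small nit: your parenthetical justification of the multiplicative independence of $2$ and the Tribonacci constant $\beta$ via irrationality alone is not sufficient (consider $\sqrt{2}$ and $2$); the correct reason is that no nonzero integer power of $\beta$ is rational, since the conjugates of $\beta$ lie strictly inside the unit circle.
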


\begin{proof}
    The  proof is quite similar to the proof of Theorem~\ref{thm11}, so we just sketch the details.

    By breadth-first search, we can identify a candidate squarefree
    Tribonacci-automatic sequence $\bf z$ over $\Sigma_4$, and prove it is indeed squarefree using {\tt Walnut}.  It is generated by the DFAO in Figure~\ref{fig3}.
    \begin{figure}[H]
\begin{center}
\includegraphics[width=6in]{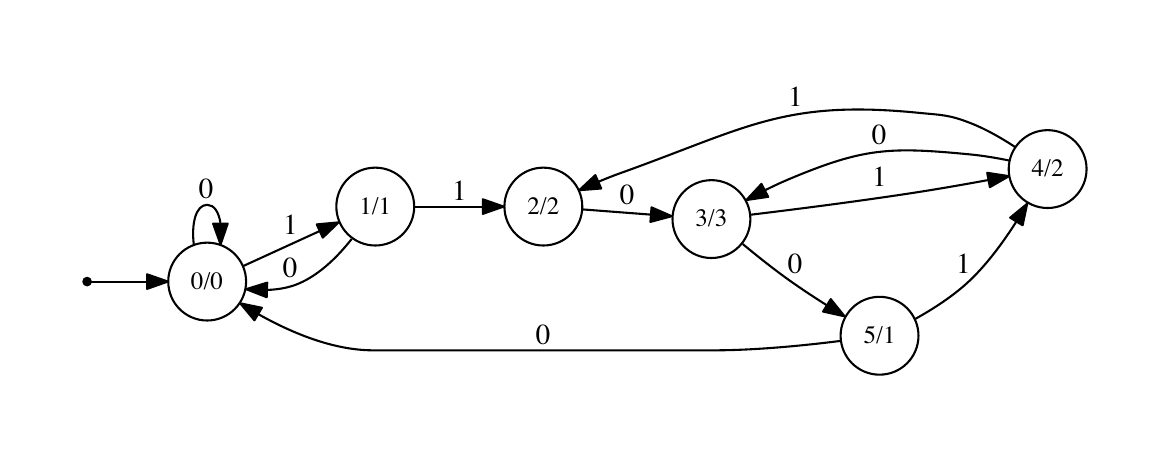}
\end{center}
\caption{Tribonacci automaton computing a squarefree sequence.}
\label{fig3}
\end{figure}
We can then apply the morphism $h$ to obtain a $(7/3)^+$-free binary Tribonacci-automatic sequence generated by a DFAO with 36849 states.  No binary sequence of smaller exponent is possible, by an argument similar to that in the proof of Theorem~\ref{thm11}.

\end{proof}

\begin{remark}
It can be shown that the critical exponent of the sequence $\bf z$ is $1.875954364915\cdots$, the
real zero of the polynomial
$4X^3-24X^2+56X-47$.
\end{remark}

\section{Going further}

It seems likely that $7/3$ is the critical exponent for many other binary
generalized automatic sequences, such as the Pell-automatic sequences \cite{Baranwal&Shallit:2019}, but a general theorem seems hard to prove.

\section*{Acknowledgments}
We gratefully acknowledge 
the support of the 
Natural Sciences and Engineering Research Council
of Canada (NSERC). Nous remercions le Conseil de recherches en sciences naturelles et en
g\'enie du Canada (CRSNG) de son soutien. 
The research of NR was supported by NSERC, grants RGPIN-2019-04111 and RGPIN-2025-04076.  The research of JS was supported by NSERC, grant RGPIN-2024-03725.


\begin{thebibliography}{99}

\bibitem{Allouche&Rampersad&Shallit:2009}
J.-P. Allouche, N.~Rampersad, and J.~Shallit.
\newblock Periodicity, repetitions, and orbits of an automatic sequence.
\newblock {\em Theoret. Comput. Sci.} {\bf 410} (2009), 2795--2803.

\bibitem{Allouche&Shallit:2003}
J.-P. Allouche and J.~Shallit.
\newblock {\em Automatic Sequences: Theory, Applications, Generalizations}.
\newblock Cambridge University Press, 2003.

\bibitem{Baranwal&Shallit:2019}
A.~R. Baranwal and J.~Shallit.
\newblock Critical exponent of infinite balanced words via the {Pell} number
  system.
\newblock In R.~Merca{\c{s}} and D.~Reidenbach, editors, {\em WORDS 2019}, Vol.
  11682 of {\em Lecture Notes in Computer Science}, pp.  80--92.
  Springer-Verlag, 2019.

\bibitem{Carpi&deLuca:2000}
A.~Carpi and A.~de~Luca.
\newblock Special factors, periodicity, and an application to {S}turmian words.
\newblock {\em Acta Inform.} {\bf 36} (2000), 983--1006.

\bibitem{Cobham:1969}
A.~Cobham.
\newblock On the base-dependence of sets of numbers recognizable by finite
  automata.
\newblock {\em Math. Systems Theory} {\bf 3} (1969), 186--192.

\bibitem{Cobham:1972}
A.~Cobham.
\newblock Uniform tag sequences.
\newblock {\em Math. Systems Theory} {\bf 6} (1972), 164--192.

\bibitem{Currie:2013}
J.~D. Currie.
\newblock Infinite ternary square-free words concatenated from permutations of
  a single word.
\newblock {\em Theoret. Comput. Sci.} {\bf 482} (2013), 1--8.

\bibitem{Currie&Mol&Rampersad:2020}
J.~D. Currie, L.~Mol, and N.~Rampersad.
\newblock The repetition threshold for binary rich words.
\newblock {\em Discrete Math. Theor. Comput. Sci.} {\bf 22} ([2020--2021]),
  Paper No. 6, 16.

\bibitem{Currie&Rampersad:2011}
J.~Currie and N.~Rampersad.
\newblock A proof of {D}ejean's conjecture.
\newblock {\em Math. Comp.} {\bf 80}(274) (2011), 1063--1070.

\bibitem{Du&Mousavi&Rowland&Schaeffer&Shallit:2017}
C.~F. Du, H.~Mousavi, E.~Rowland, L.~Schaeffer, and J.~Shallit.
\newblock Decision algorithms for {Fibonacci-automatic} words, {II}: Related
  sequences and avoidability.
\newblock {\em Theoret. Comput. Sci.} {\bf 657} (2017), 146--162.

\bibitem{Du&Mousavi&Schaeffer&Shallit:2016}
C.~F. Du, H.~Mousavi, L.~Schaeffer, and J.~Shallit.
\newblock Decision algorithms for {Fibonacci}-automatic words {III}:
  Enumeration and abelian properties.
\newblock {\em Internat. J. Found. Comp. Sci.} {\bf 27} (2016), 943--963.

\bibitem{Durand:2011}
F.~Durand.
\newblock Cobham's theorem for substitutions.
\newblock {\em J. Eur. Math. Soc. (JEMS)} {\bf 13} (2011), 1799--1814.

\bibitem{Dvorakova&Pelantova:2024}
L.~Dvo\v{r}\'{a}kov\'{a} and E.~Pelantov\'{a}.
\newblock The repetition threshold of episturmian sequences.
\newblock {\em European J. Combin.} {\bf 120} (2024), Paper No. 104001.

\bibitem{Gorbunova:2012}
I.~A. Gorbunova.
\newblock Repetition threshold for circular words.
\newblock {\em Electron. J. Combin.} {\bf 19} (2012), Paper 11, 26.

\bibitem{Karhumaki&Shallit:2004}
J.~Karhum\"{a}ki and J.~Shallit.
\newblock Polynomial versus exponential growth in repetition-free binary words.
\newblock {\em J. Combin. Theory Ser. A} {\bf 105} (2004), 335--347.

\bibitem{Lekkerkerker:1952}
C.~G. Lekkerkerker.
\newblock Voorstelling van natuurlijke getallen door een som van getallen van
  {Fibonacci}.
\newblock {\em Simon Stevin} {\bf 29} (1952), 190--195.

\bibitem{Mousavi:2016}
H.~Mousavi.
\newblock Automatic theorem proving in {Walnut}.
\newblock Preprint, available at \url{https://arxiv.org/abs/1603.06017}, 2016.

\bibitem{Mousavi&Schaeffer&Shallit:2016}
H.~Mousavi, L.~Schaeffer, and J.~Shallit.
\newblock Decision algorithms for {Fibonacci}-automatic words, {I}: {Basic}
  results.
\newblock {\em RAIRO Inform. Th\'eor. App.} {\bf 50} (2016), 39--66.

\bibitem{Mousavi&Shallit:2015}
H.~Mousavi and J.~Shallit.
\newblock Mechanical proofs of properties of the {Tribonacci} word.
\newblock In F.~Manea and D.~Nowotka, editors, {\em WORDS 2015}, Vol. 9304 of
  {\em Lecture Notes in Computer Science}, pp.  1--21. Springer-Verlag, 2015.

\bibitem{Rampersad&Shallit&Shur:2011}
N.~Rampersad, J.~O. Shallit, and A.~M. Shur.
\newblock Fife's theorem for (7/3)-powers.
\newblock In P.~Ambroz, S.~Holub, and Z.~Mas{\'{a}}kov{\'{a}}, editors, {\em
  Proceedings 8th International Conference Words 2011, Prague, Czech Republic,
  12-16th September 2011}, Vol.~63 of {\em {EPTCS}}, pp.  189--198, 2011.

\bibitem{Rampersad&Shallit&Vandomme:2019}
N.~Rampersad, J.~Shallit, and \'{E}. Vandomme.
\newblock Critical exponents of infinite balanced words.
\newblock {\em Theoret. Comput. Sci.} {\bf 777} (2019), 454--463.

\bibitem{Rao:2011}
M.~Rao.
\newblock Last cases of {D}ejean's conjecture.
\newblock {\em Theoret. Comput. Sci.} {\bf 412} (2011), 3010--3018.

\bibitem{Shallit:1988}
J.~Shallit.
\newblock A generalization of automatic sequences.
\newblock {\em Theoret. Comput. Sci.} {\bf 61} (1988), 1--16.

\bibitem{Shallit:2023}
J.~Shallit.
\newblock {\em The Logical Approach To Automatic Sequences: Exploring
  Combinatorics on Words with {\tt Walnut}}, Vol. 482 of {\em London Math.
  Society Lecture Note Series}.
\newblock CUP, 2023.

\bibitem{Thue:1906}
A.~Thue.
\newblock {\"Uber} unendliche {Zeichenreihen}.
\newblock {\em Norske vid. Selsk. Skr. Mat. Nat. Kl.} {\bf 7} (1906), 1--22.
\newblock Reprinted in {\it Selected Mathematical Papers of Axel Thue}, T.
  Nagell, editor, Universitetsforlaget, Oslo, 1977, pp.~139--158.

\bibitem{Thue:1912}
A.~Thue.
\newblock {\"Uber} die gegenseitige {Lage} gleicher {Teile} gewisser
  {Zeichenreihen}.
\newblock {\em Norske vid. Selsk. Skr. Mat. Nat. Kl.} {\bf 1} (1912), 1--67.
\newblock Reprinted in {\it Selected Mathematical Papers of Axel Thue}, T.
  Nagell, editor, Universitetsforlaget, Oslo, 1977, pp.~413--478.

\bibitem{Zeckendorf:1972}
E.~Zeckendorf.
\newblock {Repr\'esentation} des nombres naturels par une somme de nombres de
  {Fibonacci} ou de nombres de {Lucas}.
\newblock {\em Bull. Soc. Roy. {Li\`ege}} {\bf 41} (1972), 179--182.

\end{thebibliography}
\end{document}